\documentclass[reqno,10pt]{amsart}

\usepackage{a4wide}
\usepackage{mathrsfs}
\usepackage{mathtools}
\usepackage{amsmath}
\usepackage{amssymb}
\usepackage{bbm}
\numberwithin{equation}{section}
\usepackage[colorlinks,citecolor=green,linkcolor=red]{hyperref}

\usepackage[latin1]{inputenc}

\newcommand{\N}{\mathbb{N}}
\newcommand{\Q}{\mathbb{Q}}

\newcommand{\sfd}{{\sf d}}
\renewcommand{\d}{{\mathrm d}}
\newcommand{\e}{{\sf e}}
\newcommand{\X}{{\rm X}}

\newcommand{\mm}{\mathfrak{m}}
\newcommand{\1}{\mathbbm 1}
\newcommand{\ppi}{{\mbox{\boldmath\(\pi\)}}}
\newcommand{\sppi}{{\mbox{\scriptsize\boldmath\(\pi\)}}}

\newcommand{\fr}{\penalty-20\null\hfill\(\blacksquare\)}

\newtheorem{theorem}{Theorem}

\newtheorem{lemma}[theorem]{Lemma}
\newtheorem{proposition}[theorem]{Proposition}

\newtheorem{remark}[theorem]{Remark}

\linespread{1.15}
\setcounter{tocdepth}{2}

\title{A short proof of the existence of master test plans}
\author{Enrico Pasqualetto}
\address[Enrico Pasqualetto]{Scuola Normale Superiore, Piazza dei Cavalieri 7, 56126 Pisa, Italy}
\email{enrico.pasqualetto@sns.it}
\date{\today}
\keywords{Sobolev space, test plan}
\subjclass[2020]{46E35, 49J52, 53C23}
\begin{document}
\begin{abstract}
The aim of this brief note is to provide a quick and elementary proof of the following known fact: on a metric measure space whose
Sobolev space is separable, there exists a test plan that is sufficient to identify the minimal weak upper gradient of every Sobolev function.
\end{abstract}
\maketitle
\section*{Introduction}
In recent years, several (mostly equivalent) notions of Sobolev space on a metric measure space were studied.
One approach is via weak upper gradients: given a metric measure space \((\X,\sfd,\mm)\) and an exponent \(p\in(1,\infty)\),
one says that \(G\in L^p(\mm)\) is a \(p\)-weak upper gradient of \(f\in L^p(\mm)\) if for \emph{almost every}
absolutely continuous curve \(\gamma\colon[0,1]\to\X\) it holds that \(f\circ\gamma\in W^{1,1}(0,1)\) and
\begin{equation}\tag{\(\star\)}\label{eq:intro_wug}
|(f\circ\gamma)'_t|\leq G(\gamma_t)|\dot\gamma_t|,\quad\text{ for }\mathcal L^1\text{-a.e.\ }t\in[0,1].
\end{equation}
The Sobolev space \(W^{1,p}(\X)\) consists of those functions \(f\in L^p(\mm)\) admitting a \(p\)-weak upper gradient.
The \(\mm\)-a.e.\ minimal \(p\)-weak upper gradient of \(f\in W^{1,p}(\X)\) is usually denoted by \(|Df|_p\).

There are various different ways to quantify the exceptional curves in the weak upper gradient condition \eqref{eq:intro_wug}.
Shanmugalingam introduced in \cite{Shanmugalingam00} the Newtonian--Sobolev space, where \eqref{eq:intro_wug} is required
to hold along \({\rm Mod}_p\)-a.e.\ curve; here, \({\rm Mod}_p\) denotes the \(p\)-modulus, which is an outer measure on paths.
An alternative strategy, proposed by Ambrosio--Gigli--Savar\'{e} in \cite{AmbrosioGigliSavare11-3} after \cite{AmbrosioGigliSavare11},
is based on \(q\)-test plans (where \(q\) is the conjugate exponent of \(p\)): these are Borel probability measures on the space of
curves, having bounded compression and finite kinetic \(q\)-energy; see \eqref{eq:test_plan}. The approach of \cite{AmbrosioGigliSavare11-3}
requires that, for any \(q\)-test plan \(\ppi\), the property \eqref{eq:intro_wug} holds for \(\ppi\)-a.e.\ \(\gamma\).

While the \(p\)-modulus is a non-Borel outer measure, \(q\)-test plans are \(\sigma\)-additive Borel measures, but a priori
uncountably many of them are needed to recover minimal \(p\)-weak upper gradients. Nevertheless, it is shown in \cite{Pas20}
that a single \(q\)-test plan (called a master \(q\)-test plan) is sufficient to compute the minimal \(p\)-weak upper gradient
of each Sobolev function. Later on, this result was refined further: as proven in \cite{GigliNobili21} via optimal transport techniques,
it is possible to find a master \(q\)-test plan that is also capable of detecting which functions are Sobolev. Analogous results for
bounded variation functions (corresponding to the borderline case \(p=1\)) were obtained in \cite{NPS21}.
\medskip

The aim of the present note is to provide a rather elementary and direct proof of the existence of a master \(q\)-test plan on a
metric measure space \((\X,\sfd,\mm)\) whose Sobolev space \(W^{1,p}(\X)\) is separable. The argument is very general and
ultimately relies on basic measure-theoretical tools; in particular, it does not require the existence of any test plan having
special properties. The separability of \(W^{1,p}(\X)\) is a quite mild assumption: it is in force, for instance, whenever
\(W^{1,p}(\X)\) is reflexive (cf.\ with \cite[Proposition 42]{ACM14}).
In turn, the reflexivity of the Sobolev space is satisfied in most cases of interest: on metrically doubling spaces \cite[Corollary 41]{ACM14};
on spaces admitting a \(p\)-weak differentiable structure \cite[Corollary 6.7]{ErikssonBiqueSoultanis21}; on infinitesimally Hilbertian spaces
\cite{Gigli12} (in the case where \(p=2\)), such as \({\sf RCD}(K,\infty)\) spaces \cite{AmbrosioGigliSavare11-2}, as well as
Hilbert spaces (or, more generally, locally \({\sf CAT}(\kappa)\) spaces) endowed with an arbitrary boundedly-finite Borel measure \cite{DMGPS18}.
However, metric measure spaces having non-separable (and thus non-reflexive) Sobolev space do actually exist: one such example was constructed
in \cite[Proposition 44]{ACM14} after \cite[Section 12.5]{Heinonen07}.
\medskip

Let us briefly sketch the strategy we adopt to obtain a master \(q\)-test plan on any metric measure space having separable \(p\)-Sobolev space.
Given a function \(f\in W^{1,p}(\X)\) and a \(q\)-test plan \(\ppi\) on \(\X\), the key object we consider is the \(\mm\)-a.e.\ defined function
\(\Phi[\ppi,f]\colon\X\to[0,+\infty)\), which is given by
\[
\Phi[\ppi,f](x)\coloneqq\underset{\substack{\hat\sppi_x\text{-a.e.\ }(\gamma,t) \\ \text{s.t.\ }|\dot\gamma_t|>0}}{\rm ess\,sup}
\bigg|\frac{(f\circ\gamma)'_t}{|\dot\gamma_t|}\bigg|\leq|Df|_p(x),\quad\text{ for }\mm\text{-a.e.\ }x\in\X,
\]
where \(\{\hat\ppi_x\}_{x\in\X}\) is the disintegration of \(\hat\ppi\coloneqq\ppi\otimes\mathcal L^1|_{[0,1]}\) along
the evaluation map \(\e(\gamma,t)\coloneqq\gamma_t\). Roughly speaking, \(\Phi[\ppi,f]\) is measuring how far the plan
\(\ppi\) is from saturating the minimal weak upper gradient inequality for \(f\). Up to some small technical differences,
the function \(\Phi[\ppi,f]\) was previously considered in \cite{ErikssonBiqueSoultanis21}. An important observation is
that a family \(\Pi\) of \(q\)-test plans is a master family for some set \(\mathcal S\subseteq W^{1,p}(\X)\) of Sobolev
functions exactly when \(\bigvee_{\sppi\in\Pi}\Phi[\ppi,f]=|Df|_p\) for every \(f\in\mathcal S\); see Proposition \ref{prop:equiv_master_family}.
Since each essential supremum of measurable functions can be expressed \(\mm\)-a.e.\ as a countable supremum, if \(\mathcal S\)
a countable dense subset of \(W^{1,p}(\X)\), then one can find a countable master family \(\Pi\) of \(q\)-test plans for \(\mathcal S\).
The countable family \(\Pi\) can be easily reduced to a single master \(q\)-test plan \(\ppi\) for \(\mathcal S\), see Remark
\ref{rmk:equiv_contable_master_tp}. Finally, a simple continuity argument (namely, Lemma \ref{lem:cont_Phi}) allows to conclude
that \(\ppi\) is a master \(q\)-test plan for the whole Sobolev space \(W^{1,p}(\X)\); see Theorem \ref{thm:ex_master_tp}.
\medskip

\noindent\textbf{Acknowledgements.}
The author was supported by the Balzan project led by Luigi Ambrosio.
\section{Preliminaries}
\subsection{General terminology}
By a \textbf{metric measure space} \((\X,\sfd,\mm)\) we mean a complete and separable metric space \((\X,\sfd)\)
with a boundedly-finite Borel measure \(\mm\geq 0\). The space \(C([0,1],\X)\) of continuous curves
is complete and separable if endowed with the supremum distance, which is given by
\(\sfd_{\sup}(\gamma,\sigma)\coloneqq\max_{t\in[0,1]}\sfd(\gamma_t,\sigma_t)\).
A curve \(\gamma\in C([0,1],\X)\) is \textbf{absolutely continuous} if
\[
{\sf ms}(\gamma,t)=|\dot\gamma_t|\coloneqq\lim_{h\to 0}\frac{\sfd(\gamma_{t+h},\gamma_t)}{|h|}
\]
exists for \(\mathcal L_1\)-a.e.\ \(t\in[0,1]\), the resulting function \(|\dot\gamma|\) belongs to \(L^1(0,1)\),
and \(\sfd(\gamma_t,\gamma_s)\leq\int_s^t|\dot\gamma_r|\,\d r\) holds for every \(s,t\in[0,1]\) with \(s\leq t\);
by \(\mathcal L_1\) we mean the restriction of the Lebesgue measure \(\mathcal L^1\) to the unit interval \([0,1]\).
Given any \(t\in[0,1]\), we denote by \(\e_t\colon C([0,1],\X)\to\X\) the \textbf{evaluation map at time \(t\)},
which is defined as \(\e_t(\gamma)\coloneqq\gamma_t\) for every curve \(\gamma\in C([0,1],\X)\). Moreover, we will consider the
(joint) \textbf{evaluation map} \(\e\colon C([0,1],\X)\times[0,1]\to\X\), given by \(\e(\gamma,t)\coloneqq\e_t(\gamma)=\gamma_t\)
for every \((\gamma,t)\in C([0,1],\X)\times[0,1]\). Observe that each map \(\e_t\) is \(1\)-Lipschitz, while \(\e\) is continuous.
\medskip

In this note, we deal with two different concepts of essential supremum. Firstly, given a metric measure space \((\X,\sfd,\mm)\)
and a Borel function \(f\colon\X\to[0,+\infty)\), we define \({\rm ess\,sup}_\mm f\in[0,+\infty]\) as
\[
\underset{\mm}{\rm ess\,sup}\,f\coloneqq\inf\Big\{\lambda\in[0,+\infty)\;\Big|\;f\leq\lambda\,\text{ holds }\mm\text{-a.e.}\Big\}.
\]
Secondly, given a (possibly uncountable) family \(\{f_i\}_{i\in I}\) of Borel functions \(f_i\colon\X\to[0,+\infty)\), we define
\(\bigvee_{i\in I}f_i\) as the \(\mm\)-a.e.\ unique Borel function \(f\colon\X\to[0,+\infty]\) such that \(f\geq f_i\) holds
\(\mm\)-a.e.\ for all \(i\in I\), and satisfying \(f\leq g\) in the \(\mm\)-a.e.\ sense for any \(g\colon\X\to[0,+\infty]\)
Borel with the same property as \(f\). One can find \(\mathcal C\subseteq I\) countable such that
\(\bigvee_{i\in I}f_i(x)=\sup_{i\in\mathcal C}f_i(x)\) for \(\mm\)-a.e.\ \(x\in\X\).
\subsection{Test plans and Sobolev spaces}
Fix exponents \(p,q\in(1,\infty)\) with \(\frac{1}{p}+\frac{1}{q}=1\). As in \cite[Definition 4.6]{AmbrosioGigliSavare11-3},
by a \textbf{\(q\)-test plan} on \(\X\) we mean a Borel probability measure \(\ppi\in\mathscr P\big(C([0,1],\X)\big)\)
concentrated on absolutely continuous curves that satisfies the following conditions for some \(C>0\):
\begin{equation}\label{eq:test_plan}
(\e_t)_\#\ppi\leq C\mm,\;\;\;\text{for every }t\in[0,1],\qquad K_q(\ppi)\coloneqq\int\!\!\!\int_0^1|\dot\gamma_t|^q\,\d t\,\d\ppi(\gamma)<+\infty.
\end{equation}
We denote by \(C(\ppi)\) the smallest such \(C\). For any \(q\)-test plan \(\ppi\) on \(\X\), we adopt this notation:
\[
\hat\ppi\coloneqq\ppi\otimes\mathcal L_1\in\mathscr P\big(C([0,1],\X)\times[0,1]\big).
\]
Given a family \(\Pi\) of \(q\)-test plan on \(\X\) and functions \(f,G\in L^p(\mm)\), we say that \(G\) is a \textbf{\((\Pi,p)\)-weak upper gradient}
of \(f\) provided for any \(\ppi\in\Pi\) it holds that \(f\circ\gamma\in W^{1,1}(0,1)\) for \(\ppi\)-a.e.\ \(\gamma\) and
\begin{equation}\label{eq:def_wug}
|(f\circ\gamma)'_t|\leq G(\gamma_t)|\dot\gamma_t|,\quad\text{ for }\hat\ppi\text{-a.e.\ }(\gamma,t)\in C([0,1],\X)\times[0,1].
\end{equation}
If \(f\in L^p(\mm)\) admits a \((\Pi,p)\)-weak upper gradient, then we declare that \(f\in W^{1,p}_\Pi(\X)\). The previous definitions
are taken from \cite[Definition 1.11]{Pas20}. Following \cite[Definition 1.13]{Pas20}, to any given function \(f\in W^{1,p}_\Pi(\X)\) we associate its
\textbf{minimal \((\Pi,p)\)-weak upper gradient} \(|Df|_{\Pi,p}\in L^p(\mm)\), which is defined as the essential infimum of all the \((\Pi,p)\)-weak
upper gradients of \(f\). Thanks to \cite[Lemma 1.12]{Pas20}, we know that \(|Df|_{\Pi,p}\) itself satisfies \eqref{eq:def_wug}. When \(\Pi\) is
the collection \(\Pi_q(\X)\) of all \(q\)-test plans, one recovers the notion of \textbf{\(p\)-Sobolev space} \(W^{1,p}(\X)=W^{1,p}_{\Pi_q}(\X)\) from
\cite{AmbrosioGigliSavare11-3}. For any \(f\in W^{1,p}(\X)\), we write \(|Df|_p\) instead of \(|Df|_{\Pi_q,p}\) and we call it just the
\textbf{minimal \(p\)-weak upper gradient} of \(f\). Recall that \(W^{1,p}(\X)\) is a Banach space if endowed with the norm
\[
\|f\|_{W^{1,p}(\X)}\coloneqq\bigg(\int|f|^p\,\d\mm+\int|Df|_p^p\,\d\mm\bigg)^{1/p},\quad\text{ for every }f\in W^{1,p}(\X).
\]
Notice that \(W^{1,p}(\X)\subseteq W^{1,p}_\Pi(\X)\) holds for any family \(\Pi\) of \(q\)-test plans on \(\X\) and \(|Df|_{\Pi,p}\leq|Df|_p\)
in the \(\mm\)-a.e.\ sense for every \(f\in W^{1,p}(\X)\). In analogy with \cite[Definition 2.5]{Pas20}, we say that a given family \(\Pi\) of
\(q\)-test plans on \(\X\) is an \textbf{\(\mathcal S\)-master family of \(q\)-test plans}, for some \(\mathcal S\subseteq W^{1,p}(\X)\), if
\[
|Df|_{\Pi,p}=|Df|_p,\;\;\;\text{holds }\mm\text{-a.e.\ on }\X,\quad\text{ for every }f\in\mathcal S.
\]
When \(\mathcal S=W^{1,p}(\X)\), we just say that \(\Pi\) is a \textbf{master family of \(q\)-test plans}. Notice that we are not requiring that
a master family \(\Pi\) of \(q\)-test plans verifies \(W^{1,p}_\Pi(\X)=W^{1,p}(\X)\). For brevity, we say that a \(q\)-test plan \(\ppi\)
is an \textbf{\(\mathcal S\)-master \(q\)-test plan} if \(\{\ppi\}\) is an \(\mathcal S\)-master family of \(q\)-test plans.
\begin{remark}\label{rmk:equiv_contable_master_tp}{\rm
Suppose to have a countable \(\mathcal S\)-master family \(\Pi=\{\ppi^n\}_{n\in\N}\) of \(q\)-test plans on \(\X\). Set
\[
\ppi\coloneqq\frac{\tilde\ppi}{\tilde\ppi\big(C([0,1],\X)\big)},\quad\text{ where }
\tilde\ppi\coloneqq\sum_{n=1}^\infty\frac{\ppi^n}{2^n\max\big\{C(\ppi^n),K_q(\ppi^n),1\big\}}.
\]
Then \(\ppi\) is an \(\mathcal S\)-master \(q\)-test plan on \(\X\), cf.\ with \textsc{Step 3} in the proof of \cite[Theorem 2.6]{Pas20}.
\fr}\end{remark}
\section{Main result}
Let \((\X,\sfd,\mm)\) be a metric measure space. Let \(p,q\in(1,\infty)\) satisfy \(\frac{1}{p}+\frac{1}{q}=1\). Given a \(q\)-test plan
\(\ppi\) on \(\X\) and \(f\in W^{1,p}(\X)\), we define the auxiliary functions \(\phi_+[\ppi,f],\phi_-[\ppi,f]\in L^p(\hat\ppi)\) as
\[
\phi_\pm[\ppi,f](\gamma,t)\coloneqq\pm\1_{\{{\sf ms}>0\}}(\gamma,t)\frac{(f\circ\gamma)'_t}{|\dot\gamma_t|},
\quad\text{ for }\hat\ppi\text{-a.e.\ }(\gamma,t).
\]
Moreover, we define \(\mm\)-a.e.\ the functions \(\Phi_+[\ppi,f],\Phi_-[\ppi,f]\in L^p(\mm)\) as
\[
\Phi_\pm[\ppi,f](x)\coloneqq\underset{\hat\sppi_x}{\rm ess\,sup}\,\phi_\pm[\ppi,f],\quad
\text{ for }\mm\text{-a.e.\ }x\in S_\sppi\coloneqq\bigg\{\frac{\d(\e_\#\hat\ppi)}{\d\mm}>0\bigg\},
\]
and \(\Phi_\pm[\ppi,f](x)\coloneqq 0\) for \(\mm\)-a.e.\ \(x\in\X\setminus S_\sppi\), where \(\hat\ppi=\int\hat\ppi_x\,\d(\e_\#\hat\ppi)(x)\)
is the disintegration of \(\hat\ppi\) along \(\e\). Setting
\(B_\lambda^\pm\coloneqq\big\{x\in\X\,\big|\,\hat\ppi_x\big(\{\phi_\pm[\ppi,f]>\lambda\}\big)=0\big\}\) for every \(\lambda>0\),
we can express \(\mm\)-a.e.\ the function \(\Phi_\pm[\ppi,f]\) as  \(\inf_{\lambda\in\Q\cap(0,+\infty)}\lambda\1_{B_\lambda^\pm}\),
showing the measurability of \(\Phi_\pm[\ppi,f]\). Finally, we define \(\mm\)-a.e.\ the function \(\Phi[\ppi,f]\in L^p(\mm)\)
as \(\Phi[\ppi,f]\coloneqq\max\big\{\Phi_+[\ppi,f],\Phi_-[\ppi,f]\big\}\leq|Df|_p\).
\begin{proposition}\label{prop:equiv_master_family}
Let \((\X,\sfd,\mm)\) be a metric measure space. Let \(p,q\in(1,\infty)\) be such that \(\frac{1}{p}+\frac{1}{q}=1\).
Let \(\mathcal S\) be a subset of \(W^{1,p}(\X)\) and let \(\Pi\) be a family of \(q\)-test plans on \(\X\). Then \(\Pi\)
is an \(\mathcal S\)-master family of \(q\)-test plans if and only if
\begin{equation}\label{eq:equiv_master_family_claim}
\bigvee_{\sppi\in\Pi}\Phi[\ppi,f]=|Df|_p,\;\;\;\text{in the }\mm\text{-a.e.\ sense,}\quad\text{ for every }f\in\mathcal S.
\end{equation}
\end{proposition}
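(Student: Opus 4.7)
The plan is to show the identity
\[
|Df|_{\Pi,p}=\bigvee_{\sppi\in\Pi}\Phi[\ppi,f],\quad\text{in the }\mm\text{-a.e.\ sense,}
\]
from which the proposition follows immediately (both sides equal \(|Df|_p\) iff the common side does). The entire content therefore lies in translating the pointwise weak upper gradient inequality into an inequality involving \(\Phi[\ppi,f]\).

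The first step is the \emph{characterization lemma}: I claim that \(G\in L^p(\mm)\) with \(G\geq 0\) is a \((\Pi,p)\)-weak upper gradient of \(f\) if and only if \(\Phi[\ppi,f]\leq G\) \(\mm\)-a.e.\ for every \(\ppi\in\Pi\). Fix \(\ppi\in\Pi\). By definition of \(\phi_\pm[\ppi,f]\), the defining inequality \eqref{eq:def_wug} is equivalent to saying that \(\phi_\pm[\ppi,f](\gamma,t)\leq G(\gamma_t)\) for \(\hat\ppi\)-a.e.\ \((\gamma,t)\in\{{\sf ms}>0\}\), while on \(\{{\sf ms}=0\}\) the inequality is automatic (since \(f\in W^{1,p}(\X)\) already forces \((f\circ\gamma)'_t=0\) there via the full weak upper gradient \(|Df|_p\)). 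Also, \(f\in W^{1,p}(\X)\) guarantees \(f\circ\gamma\in W^{1,1}(0,1)\) for \(\ppi\)-a.e.\ \(\gamma\), so this assumption in the definition of \((\Pi,p)\)-weak upper gradient is for free.

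The second step is the \emph{disintegration translation}. Using \(\hat\ppi=\int\hat\ppi_x\,\d(\e_\#\hat\ppi)(x)\), the condition \(\phi_\pm[\ppi,f]\leq G\circ\e\) \(\hat\ppi\)-a.e.\ becomes: for \(\e_\#\hat\ppi\)-a.e.\ \(x\), \(\phi_\pm[\ppi,f]\leq G(x)\) holds \(\hat\ppi_x\)-a.e., which is exactly \(\Phi_\pm[\ppi,f](x)\leq G(x)\). Because \((\e_t)_\#\ppi\leq C(\ppi)\mm\) for each \(t\), one has \(\e_\#\hat\ppi\leq C(\ppi)\mm\), so \(\e_\#\hat\ppi\)-a.e.\ and \(\mm\)-a.e.\ coincide on \(S_\sppi\). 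Outside \(S_\sppi\) the mass of \(\e_\#\hat\ppi\) is zero and \(\Phi_\pm[\ppi,f]\) is set to \(0\), so \(\Phi[\ppi,f]\leq G\) is automatic there. This proves the equivalence claimed above.

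The third step assembles the conclusion. Taking the \(\mm\)-essential supremum over \(\ppi\in\Pi\) in the characterization, \(G\geq 0\) is a \((\Pi,p)\)-weak upper gradient of \(f\) iff \(\bigvee_{\sppi\in\Pi}\Phi[\ppi,f]\leq G\) \(\mm\)-a.e. Since \(\Phi[\ppi,f]\leq|Df|_p\in L^p(\mm)\) by construction, the function \(\bigvee_{\sppi\in\Pi}\Phi[\ppi,f]\) itself lies in \(L^p(\mm)\), hence is itself a \((\Pi,p)\)-weak upper gradient and equals the minimal one, i.e.\ \(|Df|_{\Pi,p}\). Comparing to \(|Df|_p\) now gives the biconditional of the proposition.

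The only delicate point to execute carefully will be the disintegration step, in particular the two measure-theoretic reductions: (i) that \(\phi_\pm[\ppi,f]\leq G\circ\e\) holding \(\hat\ppi\)-a.e.\ is equivalent, \emph{via the countable representation of the essential supremum} sketched in the Preliminaries, to \(\Phi_\pm[\ppi,f]\leq G\) holding \(\e_\#\hat\ppi\)-a.e.; and (ii) that one can freely pass between \(\e_\#\hat\ppi\)-a.e.\ and \(\mm\)-a.e.\ statements on \(S_\sppi\). Once these two measure-theoretic sanity checks are in place, the rest of the argument is essentially bookkeeping.
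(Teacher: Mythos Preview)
Your proof is correct and in fact establishes the slightly stronger identity
\[
|Df|_{\Pi,p}=\bigvee_{\sppi\in\Pi}\Phi[\ppi,f]\quad\text{for every }f\in W^{1,p}(\X),
\]
from which the biconditional is immediate. This is a genuinely different route from the paper's proof. The paper treats the two implications separately: for necessity it argues by contradiction, constructing from a hypothetical failure of \eqref{eq:equiv_master_family_claim} a strictly smaller \((\Pi,p)\)-weak upper gradient of the form \(\1_P(|Df|_p-\varepsilon)+\1_{\X\setminus P}|Df|_p\); for sufficiency it uses, for each \(k\in\N\), countable Borel partitions \(\{A^\pm_{n,k}\}_n\) and plans \(\ppi^{n,k}\) to squeeze \(|Df|_p\leq G+1/k\) on each piece. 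Your argument bypasses both the contradiction and the partition machinery by isolating the clean \emph{characterization lemma} (a nonnegative \(G\in L^p(\mm)\) is a \((\Pi,p)\)-weak upper gradient iff \(\Phi[\ppi,f]\leq G\) \(\mm\)-a.e.\ for all \(\ppi\in\Pi\)) and then observing that \(\bigvee_{\sppi\in\Pi}\Phi[\ppi,f]\) is itself admissible, hence minimal. The disintegration step and the equivalence of \(\e_\#\hat\ppi\)-a.e.\ and \(\mm\)-a.e.\ on \(S_\sppi\) are handled correctly. The payoff of your approach is a more transparent proof and an explicit formula for \(|Df|_{\Pi,p}\); the paper's approach, by contrast, never needs to verify that the essential supremum is itself a weak upper gradient, working instead only with competitors that are manifestly in \(L^p\).
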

\begin{proof}
To prove necessity, assume \(\Pi\) is an \(\mathcal S\)-master family of \(q\)-test plans. We argue by contradiction:
suppose \eqref{eq:equiv_master_family_claim} fails. Then there exist \(f\in\mathcal S\), \(\varepsilon>0\),
and a Borel set \(P\subseteq\X\) with \(\mm(P)>0\) such that for any \(\ppi\in\Pi\) the inequalities
\(\Phi_\pm[\ppi,f]<|Df|_p-\varepsilon\) hold \(\mm\)-a.e.\ on \(P\). Hence, for any \(\ppi\in\Pi\)
we have that for \((\e_\#\hat\ppi)\)-a.e.\ \(x\in P\) and \(\hat\ppi_x\text{-a.e.\ }(\gamma,t)\) it holds that
\(|(f\circ\gamma)'_t|\leq\big(|Df|_p(\gamma_t)-\varepsilon\big)|\dot\gamma_t|\).
This implies that for any \(\ppi\in\Pi\) it holds \(|(f\circ\gamma)'_t|\leq\big(|Df|_p(\gamma_t)-\varepsilon\big)|\dot\gamma_t|\)
for \(\hat\ppi\)-a.e.\ \((\gamma,t)\in\e^{-1}(P)\), thus the function \(G\coloneqq\1_P\big(|Df|_p-\varepsilon\big)+\1_{\X\setminus P}|Df|_p\in L^p(\mm)\)
is a \((\Pi,p)\)-weak upper gradient of \(f\). Given that the strict inequality \(G<|Df|_p\) holds on a positive \(\mm\)-measure set, this contradicts
the assumption that \(\Pi\) is an \(\mathcal S\)-master family of \(q\)-test plans. Consequently, property \eqref{eq:equiv_master_family_claim} is proven.

To prove sufficiency, assume \eqref{eq:equiv_master_family_claim} holds. To prove that \(\Pi\) is
an \(\mathcal S\)-master family of \(q\)-test plans amounts to showing that if \(G\) is a \((\Pi,p)\)-weak upper gradient of \(f\in\mathcal S\),
then \(|Df|_p\leq G\) holds \(\mm\)-a.e. Given such \(f\) and \(G\), we know from \eqref{eq:equiv_master_family_claim} that for any \(k\in\N\)
there exist \((\ppi^{n,k})_n\subseteq\Pi\) and a Borel partition \(\{A_{n,k}^+,A_{n,k}^-\}_{n\in\N}\) of \(\X\) such that
\(\Phi_\pm[\ppi^{n,k},f]>|Df|_p-1/k\) holds \(\mm\)-a.e.\ on \(A_{n,k}^\pm\). Hence, given any \(n\in\N\),
for \(\mm\)-a.e.\ \(x\in A_{n,k}^\pm\) we have that \(x\in S_{\sppi^{n,k}}\) and that \(\hat\ppi^{n,k}_x(B^{n,k}_{\pm,x})>0\), where
we define \(B^{n,k}_{\pm,x}\coloneqq\big\{\phi_\pm[\ppi^{n,k},f]>|Df|_p-1/k\big\}\). Then for \(\mm\)-a.e.\ point \(x\in A_{n,k}^\pm\) we have
\[
|Df|_p(\gamma_t)\leq\pm\1_{\{{\sf ms}>0\}}(\gamma,t)\,\frac{(f\circ\gamma)'_t}{|\dot\gamma_t|}+\frac{1}{k}
\leq G(\gamma_t)+\frac{1}{k},\quad\text{ for }\hat\ppi^{n,k}_x\text{-a.e.\ }(\gamma,t)\in B^{n,k}_{\pm,x}.
\]
Hence, for \(\mm\)-a.e.\ \(x\in A_{n,k}^+\cup A_{n,k}^-\) it holds \(\hat\ppi^{n,k}_x\big(\big\{|Df|_p\circ\e\leq G\circ\e+1/k\big\}\big)>0\),
whence it follows that \(|Df|_p\leq G+1/k\) holds \(\mm\)-a.e.\ on \(A_{n,k}^+\cup A_{n,k}^-\). Therefore, \(|Df|_p\leq G\) holds \(\mm\)-a.e.\ on \(\X\).
\end{proof}
\begin{lemma}\label{lem:cont_Phi}
Let \((\X,\sfd,\mm)\) be a metric measure space. Let \(p,q\in(1,\infty)\) satisfy \(\frac{1}{p}+\frac{1}{q}=1\).
Fix two functions \(f,g\in W^{1,p}(\X)\) and a \(q\)-test plan \(\ppi\) on \(\X\). Then
\(\big|\Phi[\ppi,f]-\Phi[\ppi,g]\big|\leq|D(f-g)|_p\) in the \(\mm\)-a.e.\ sense.
In particular, if \((f_n)_n\subseteq W^{1,p}(\X)\) satisfies \(|D(f-f_n)|_p\to 0\) pointwise \(\mm\)-a.e., then
\begin{equation}\label{eq:cont_Phi_claim2}
\bigvee_{\sppi\in\Pi}\Phi[\ppi,f]=\lim_{n\to\infty}\bigvee_{\sppi\in\Pi}\Phi[\ppi,f_n],\;\;\;\mm\text{-a.e.\ on }\X,
\quad\text{ for every }\Pi\subseteq\Pi_q(\X).
\end{equation}
\end{lemma}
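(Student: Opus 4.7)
The plan is to reduce everything to the pointwise (in $(\gamma,t)$) Lipschitz dependence of the integrands $\phi_\pm[\ppi,\cdot]$ on the Sobolev function, using the linearity of the derivative along curves.

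First I would observe that since $f,g\in W^{1,p}(\X)$ and $\ppi$ is a $q$-test plan, both $f\circ\gamma$ and $g\circ\gamma$ belong to $W^{1,1}(0,1)$ for $\ppi$-a.e.\ $\gamma$, and hence $((f-g)\circ\gamma)'_t=(f\circ\gamma)'_t-(g\circ\gamma)'_t$ for $\hat\ppi$-a.e.\ $(\gamma,t)$. Combined with the defining inequality \eqref{eq:def_wug} applied to $f-g\in W^{1,p}(\X)$ and its minimal $p$-weak upper gradient $|D(f-g)|_p$, this yields
\[
\bigl|\phi_\pm[\ppi,f](\gamma,t)-\phi_\pm[\ppi,g](\gamma,t)\bigr|\leq|D(f-g)|_p(\gamma_t),\quad\text{ for }\hat\ppi\text{-a.e.\ }(\gamma,t).
\]

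Next I would pass to the essential supremum against the disintegration measures. Since $\hat\ppi_x$ is concentrated on $\e^{-1}(x)$ for $(\e_\#\hat\ppi)$-a.e.\ $x$, the function $|D(f-g)|_p\circ\e$ equals the constant $|D(f-g)|_p(x)$ $\hat\ppi_x$-a.e., so taking $\underset{\hat\ppi_x}{\rm ess\,sup}$ of the above inequality gives
\[
\bigl|\Phi_\pm[\ppi,f](x)-\Phi_\pm[\ppi,g](x)\bigr|\leq|D(f-g)|_p(x)\quad\text{for }\mm\text{-a.e.\ }x\in S_\sppi,
\]
and outside $S_\sppi$ both quantities vanish, so the bound is trivial. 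Since $|\max\{a_+,a_-\}-\max\{b_+,b_-\}|\leq\max_\pm|a_\pm-b_\pm|$, this gives $|\Phi[\ppi,f]-\Phi[\ppi,g]|\leq|D(f-g)|_p$ $\mm$-a.e., which is the first claim.

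For the convergence statement \eqref{eq:cont_Phi_claim2}, I would apply the first part to each $\ppi\in\Pi$ and each $f_n$: from $\Phi[\ppi,f]-|D(f-f_n)|_p\leq\Phi[\ppi,f_n]\leq\Phi[\ppi,f]+|D(f-f_n)|_p$ one obtains, after taking the essential supremum over $\ppi\in\Pi$ on both sides (which is a contraction, since adding an $\mm$-a.e.\ constant in $\ppi$ passes through $\bigvee$),
\[
\bigg|\bigvee_{\sppi\in\Pi}\Phi[\ppi,f]-\bigvee_{\sppi\in\Pi}\Phi[\ppi,f_n]\bigg|\leq|D(f-f_n)|_p\quad\mm\text{-a.e.,}
\]
and the assumed pointwise $\mm$-a.e.\ convergence $|D(f-f_n)|_p\to 0$ closes the argument. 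The only genuinely subtle step is the transition from the $\hat\ppi$-a.e.\ inequality to an $\mm$-a.e.\ inequality on $S_\sppi$ via the disintegration, which one must justify by choosing jointly measurable $\hat\ppi$-negligible sets in the spirit of the essential supremum description $\Phi_\pm[\ppi,f]=\inf_{\lambda\in\Q_+}\lambda\1_{B_\lambda^\pm}$ recalled before the proposition.
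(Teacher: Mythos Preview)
Your proposal is correct and follows essentially the same route as the paper: bound $|\phi_\pm[\ppi,f]-\phi_\pm[\ppi,g]|$ pointwise by $|D(f-g)|_p\circ\e$ via the weak upper gradient inequality for $f-g$, pass to the $\hat\ppi_x$-essential supremum (using that $|D(f-g)|_p\circ\e$ is $\hat\ppi_x$-a.e.\ constant), and then take the essential supremum over $\ppi\in\Pi$ to get \eqref{eq:cont_Phi_claim2}. The only differences are cosmetic: you spell out the fibrewise concentration of $\hat\ppi_x$ and the elementary $\max$ inequality, and you flag the $\hat\ppi$-a.e.\ to $\mm$-a.e.\ passage as a subtlety, whereas the paper absorbs these directly into one line.
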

\begin{proof}
We know from the definition of \(\Phi\) that for \(\mm\)-a.e.\ \(x\in S_\sppi\) and for \(\hat\ppi_x\)-a.e.\ \((\gamma,t)\) it holds that
\[
\big|\phi_\pm[\ppi,f](\gamma,t)-\phi_\pm[\ppi,g](\gamma,t)\big|=
\1_{\{{\sf ms}>0\}}(\gamma,t)\frac{\big|\big((f-g)\circ\gamma\big)'_t\big|}{|\dot\gamma_t|}\leq|D(f-g)|_p(x).
\]
By passing to the essential supremum with respect to \(\hat\ppi_x\), we obtain for \(\mm\)-a.e.\ \(x\in S_\sppi\) that
\[
\Phi[\ppi,f](x)\leq\Phi[\ppi,g](x)+|D(f-g)|_p(x),\qquad\Phi[\ppi,g](x)\leq\Phi[\ppi,f](x)+|D(f-g)|_p(x),
\]
whence the first claim follows. To prove \eqref{eq:cont_Phi_claim2}, define \(I\coloneqq\bigvee_{\sppi\in\Pi}\Phi[\ppi,f]\) and
\(I_n\coloneqq\bigvee_{\sppi\in\Pi}\Phi[\ppi,f_n]\). It follows from the first claim that if \(n\in\N\) and \(\ppi\in\Pi\), then
\(\Phi[\ppi,f]-|D(f-f_n)|_p\leq\Phi[\ppi,f_n]\leq I_n\) and \(\Phi[\ppi,f_n]-|D(f-f_n)|_p\leq\Phi[\ppi,f]\leq I\) in the \(\mm\)-a.e.\ sense.
By the arbitrariness of \(\pi\in\Pi\), we deduce that \(I-|D(f-f_n)|_p\leq I_n\) and \(I_n-|D(f-f_n)|_p\leq I\) hold \(\mm\)-a.e.\ on \(\X\).
We have shown that \(|I-I_n|\leq |D(f-f_n)|_p\) holds \(\mm\)-a.e.\ for all \(n\in\N\), thus by letting \(n\to\infty\) we obtain \eqref{eq:cont_Phi_claim2}.
\end{proof}
\begin{theorem}\label{thm:ex_master_tp}
Let \((\X,\sfd,\mm)\) be a metric measure space. Let \(p,q\in(1,\infty)\) satisfy \(\frac{1}{p}+\frac{1}{q}=1\).
Let \(\mathcal S\) be a separable subset of \(W^{1,p}(\X)\). Then there exists an \(\mathcal S\)-master \(q\)-test plan on \(\X\).
In particular, if the Sobolev space \(W^{1,p}(\X)\) is separable, then there exists a master \(q\)-test plan on \(\X\).
\end{theorem}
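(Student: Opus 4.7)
The plan is to implement the three-step strategy sketched in the introduction. First, pick a countable subset \(\{f_k\}_{k\in\N}\) of \(\mathcal{S}\) that is dense in the \(W^{1,p}(\X)\)-norm. Second, build a single master \(q\)-test plan \(\ppi\) for \(\{f_k\}_{k\in\N}\). Third, promote \(\ppi\) to an \(\mathcal{S}\)-master plan via the continuity estimate of Lemma \ref{lem:cont_Phi}. The ``in particular'' statement is then obtained by choosing \(\mathcal{S}=W^{1,p}(\X)\).

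For the construction step, fix \(k\in\N\). The collection \(\Pi_q(\X)\) of all \(q\)-test plans is by definition a master family, so Proposition \ref{prop:equiv_master_family} gives \(\bigvee_{\sppi\in\Pi_q(\X)}\Phi[\ppi,f_k]=|Df_k|_p\) in the \(\mm\)-a.e.\ sense. The countable-reduction property of the essential supremum recalled in the preliminaries then furnishes a countable subfamily \(\Pi_k\subseteq\Pi_q(\X)\) achieving the same supremum. Setting \(\Pi\coloneqq\bigcup_{k\in\N}\Pi_k\), which remains countable, Proposition \ref{prop:equiv_master_family} shows that \(\Pi\) is a \(\{f_k\}_{k\in\N}\)-master family of \(q\)-test plans. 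Remark \ref{rmk:equiv_contable_master_tp} finally condenses \(\Pi\) into a single \(\{f_k\}_{k\in\N}\)-master \(q\)-test plan \(\ppi\) on \(\X\).

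For the extension step, fix \(f\in\mathcal{S}\) and a sequence \((f_{k_n})_n\subseteq\{f_k\}_{k\in\N}\) converging to \(f\) in \(W^{1,p}(\X)\); then \(|D(f-f_{k_n})|_p\to 0\) in \(L^p(\mm)\), and up to passing to a subsequence we may assume this convergence is pointwise \(\mm\)-a.e. Applying Lemma \ref{lem:cont_Phi} to the singleton family \(\{\ppi\}\) yields \(\Phi[\ppi,f]=\lim_{n\to\infty}\Phi[\ppi,f_{k_n}]\) in the \(\mm\)-a.e.\ sense. Since \(\ppi\) is an \(\{f_k\}_{k\in\N}\)-master, Proposition \ref{prop:equiv_master_family} gives \(\Phi[\ppi,f_{k_n}]=|Df_{k_n}|_p\) \(\mm\)-a.e., while the subadditivity of the minimal weak upper gradient produces the triangle-type inequality \(\big||Df_{k_n}|_p-|Df|_p\big|\leq|D(f-f_{k_n})|_p\), hence \(|Df_{k_n}|_p\to|Df|_p\) \(\mm\)-a.e. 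Combining these identifications, \(\Phi[\ppi,f]=|Df|_p\) \(\mm\)-a.e., and a final appeal to Proposition \ref{prop:equiv_master_family} (again with \(\Pi=\{\ppi\}\)) shows that \(\ppi\) is an \(\mathcal{S}\)-master \(q\)-test plan.

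I do not expect any substantial obstacle: the only mildly delicate issue is that Lemma \ref{lem:cont_Phi} requires pointwise \(\mm\)-a.e.\ convergence of \(|D(f-f_{k_n})|_p\) rather than mere \(L^p\)-convergence, which is handled by the usual extraction of an a.e.\ convergent subsequence. Everything else is bookkeeping around Proposition \ref{prop:equiv_master_family}, the countable-reduction of essential suprema, and Remark \ref{rmk:equiv_contable_master_tp}.
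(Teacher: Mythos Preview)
Your proposal is correct and follows essentially the same approach as the paper: use Proposition~\ref{prop:equiv_master_family} and the countable reduction of essential suprema to get a countable master family for the dense sequence, condense via Remark~\ref{rmk:equiv_contable_master_tp}, and extend to all of \(\mathcal S\) using Lemma~\ref{lem:cont_Phi}. The only cosmetic difference is that the paper applies Lemma~\ref{lem:cont_Phi} to the countable family \(\Pi\) first and condenses to a single plan afterwards, whereas you condense first and then apply Lemma~\ref{lem:cont_Phi} to the singleton \(\{\ppi\}\); both orderings work.
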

\begin{proof}
Fix a dense sequence \((f_n)_n\subseteq\mathcal S\). Being \(\Pi_q(\X)\) a master family of \(q\)-test plans, Proposition \ref{prop:equiv_master_family}
ensures that \(\bigvee_{\sppi\in\Pi_q(\X)}\Phi[\ppi,f_n]=|Df_n|_p\) holds \(\mm\)-a.e.\ for any \(n\in\N\). In particular, we can find a countable set
\(\Pi^n\subseteq\Pi_q(\X)\) such that \(\sup_{\sppi\in\Pi^n}\Phi[\ppi,f_n]=|Df_n|_p\) holds \(\mm\)-a.e. Consider now the countable family
\(\Pi\coloneqq\bigcup_{n\in\N}\Pi^n\). It holds that
\begin{equation}\label{eq:ex_master_tp_aux}
\bigvee_{\sppi\in\Pi}\Phi[\ppi,f_n]\geq\sup_{\sppi\in\Pi^n}\Phi[\ppi,f_n]=|Df_n|_p,\;\;\;\text{in the }\mm\text{-a.e.\ sense,}\quad\text{ for every }n\in\N.
\end{equation}
Fix any \(f\in\mathcal S\). We can find \((n_k)_k\subseteq\N\) such that \(f_{n_k}\to f\) in \(W^{1,p}(\X)\),
thus (up to a non-relabelled subsequence) we have \(|D(f_{n_k}-f)|_p\to 0\) pointwise \(\mm\)-a.e. By combining \eqref{eq:ex_master_tp_aux}
with Lemma \ref{lem:cont_Phi}, we deduce that \(\bigvee_{\sppi\in\Pi}\Phi[\ppi,f]=|Df|_p\) in the \(\mm\)-a.e.\ sense. Being \(f\in\mathcal S\)
arbitrary, we have proven that \(\Pi\) satisfies \eqref{eq:equiv_master_family_claim}, thus Proposition \ref{prop:equiv_master_family} ensures
that \(\Pi\) is an \(\mathcal S\)-master family of \(q\)-test plans. By recalling Remark \ref{rmk:equiv_contable_master_tp}, we finally conclude
that an \(\mathcal S\)-master \(q\)-test plan on \(\X\) exists.
\end{proof}
\begin{remark}{\rm
Given a metric measure space \((\X,\sfd,\mm)\) and exponents \(p,q\in(1,\infty)\) with \(\frac{1}{p}+\frac{1}{q}=1\), we know
from \cite[Theorem 2.6]{Pas20} (or \cite[Theorem A.2]{GigliNobili21}) that a master \(q\)-test plan \(\ppi\) on \(\X\) exists;
notice that here no separability assumption on \(W^{1,p}(\X)\) is made. Then Proposition \ref{prop:equiv_master_family} implies that
\begin{equation}\label{eq:repr_mwug}
\Phi[\ppi,f]=|Df|_p,\;\;\;\text{holds }\mm\text{-a.e.\ on }\X,\quad\text{ for every }f\in W^{1,p}(\X).
\end{equation}
This improves a result of \cite{ErikssonBiqueSoultanis21}: in \cite[Theorem 1.1]{ErikssonBiqueSoultanis21} it is shown that
for any \(p\)-Sobolev function \(f\) there exists a plan \(\ppi_f\) that `represents' its minimal weak upper gradient, meaning
that \(\Phi[\ppi_f,f]=|Df|_p\), while \eqref{eq:repr_mwug} says that a single \(q\)-test plan \(\ppi\) represents \(|Df|_p\)
for \emph{every} \(f\in W^{1,p}(\X)\).
\fr}\end{remark}
\def\cprime{$'$} \def\cprime{$'$}


\begin{thebibliography}{10}

\bibitem{ACM14}
{\sc L.~Ambrosio, M.~Colombo, and S.~Di~Marino}, {\em Sobolev spaces in metric
  measure spaces: reflexivity and lower semicontinuity of slope}, in
  Variational Methods for Evolving Objects, Math. Soc. Japan, 2015, pp.~1--58.

\bibitem{AmbrosioGigliSavare11-3}
{\sc L.~Ambrosio, N.~Gigli, and G.~Savar{\'e}}, {\em Density of {L}ipschitz
  functions and equivalence of weak gradients in metric measure spaces}, Rev.
  Mat. Iberoam., 29 (2013), pp.~969--996.

\bibitem{AmbrosioGigliSavare11}
\leavevmode\vrule height 2pt depth -1.6pt width 23pt, {\em Calculus and heat
  flow in metric measure spaces and applications to spaces with {R}icci bounds
  from below}, Invent. Math., 195 (2014), pp.~289--391.

\bibitem{AmbrosioGigliSavare11-2}
\leavevmode\vrule height 2pt depth -1.6pt width 23pt, {\em Metric measure
  spaces with {R}iemannian {R}icci curvature bounded from below}, Duke Math.
  J., 163 (2014), pp.~1405--1490.

\bibitem{DMGPS18}
{\sc S.~Di~Marino, N.~Gigli, E.~Pasqualetto, and E.~Soultanis}, {\em
  Infinitesimal {H}ilbertianity of locally \(\mathrm{CAT}(\kappa)\)-spaces}, J.
  Geom. Anal., 31 (2021), pp.~7621--7685.

\bibitem{ErikssonBiqueSoultanis21}
{\sc S.~Eriksson-Bique and E.~Soultanis}, {\em Curvewise characterizations of
  minimal upper gradients and the construction of a {S}obolev differential}.
\newblock Preprint, arXiv:2102.08097, 2021.

\bibitem{Gigli12}
{\sc N.~Gigli}, {\em On the differential structure of metric measure spaces and
  applications}, Mem. Amer. Math. Soc., 236 (2015), pp.~vi+91.

\bibitem{GigliNobili21}
{\sc N.~Gigli and F.~Nobili}, {\em A first-order condition for the independence
  on $p$ of weak gradients}.
\newblock Preprint, arXiv:2112.12849, 2021.

\bibitem{Heinonen07}
{\sc J.~Heinonen}, {\em Nonsmooth calculus}, Bull. Amer. Math. Soc., 44 (2007),
  pp.~163--232.

\bibitem{NPS21}
{\sc F.~Nobili, E.~Pasqualetto, and T.~Schultz}, {\em On master test plans for
  the space of \(\mathrm{BV}\) functions}.
\newblock To appear in Advances in Calculus of Variations, arXiv:2109.04980,
  2021.

\bibitem{Pas20}
{\sc E.~Pasqualetto}, {\em Testing the {S}obolev property with a single test
  plan}, Studia Mathematica,  (2022).
\newblock DOI: 10.4064/sm200630-24-8, published online: 3 March 2022.

\bibitem{Shanmugalingam00}
{\sc N.~Shanmugalingam}, {\em Newtonian spaces: an extension of {S}obolev
  spaces to metric measure spaces}, Rev. Mat. Iberoamericana, 16 (2000),
  pp.~243--279.

\end{thebibliography}
\end{document}